\documentclass[11pt,reqno]{preprint}

\usepackage[full]{textcomp}
\usepackage[osf]{newtxtext}
\usepackage{comment}

\usepackage{amssymb}
\usepackage{mathtools}
\usepackage{hyperref}
\usepackage{breakurl}
\usepackage{mhenvs}
\usepackage{mhequ}
\usepackage{mhsymb}
\usepackage{booktabs}
\usepackage{tikz}
\usetikzlibrary{decorations.pathreplacing}
\usetikzlibrary{decorations.markings}
\usetikzlibrary{decorations.pathmorphing}
\usetikzlibrary{snakes}
\usepackage{mathrsfs}
\usepackage{longtable}
\usepackage{scalerel}
\usepackage{cprotect}
\usepackage{multirow}
\usepackage{colortbl}
\usepackage{makecell}
\usepackage{resizegather}
\usepackage{enumerate}

\def\cP{{\mathcal P}}

\newcommand{\mfg}{\mathfrak{g}}

\def\bt{\begin{theorem}}
	\def\et{\end{theorem}}
\def\bl{\begin{lemma}}
	\def\el{\end{lemma}}
\def\br{\begin{remark}}
	\def\er{\end{remark}}
\def\bx{\begin{Examples}}
	\def\ex{\end{Examples}}
\def\bd{\begin{definition}}
	\def\ed{\end{definition}}
\def\bp{\begin{proposition}}
	\def\ep{\end{proposition}}
\def\bc{\begin{corollary}}
	\def\ec{\end{corollary}}
\def\<{{\langle}}
\def\>{{\rangle}}

\newcommand{\dif}{{\mathord{{\rm d}}}}
\newcommand{\Tr}{\operatorname{Tr}}
\def\Re{\operatorname{Re}}
\newcommand{\Var}{\operatorname{Var}}

\newcommand{\Ent}{\operatorname{Ent}}
\newcommand{\Hess}{\operatorname{Hess}}
\newcommand{\osc}{\operatorname{osc}}
\newcommand{\Ric}{\operatorname{Ric}}

\definecolor{codegreen}{rgb}{0,0.6,0}
\definecolor{codegray}{rgb}{0.5,0.5,0.5}
\definecolor{codepurple}{rgb}{0.58,0,0.82}
\definecolor{backcolour}{rgb}{0.95,0.95,0.92}

\tikzset{
    midarrow/.style={postaction={decorate,decoration={markings,mark=at position 0.7 with {\arrow{>}}}}},
    midarrow1/.style={postaction={decorate,decoration={markings,mark=at position 0.45 with {\arrow{>}}}}},
    midarrow2/.style={postaction={decorate,decoration={markings,mark=at position 0.75 with {\arrow{>}}}}},
}

\colorlet{darkblue}{blue!90!black}
\colorlet{darkred}{red!90!black}
\colorlet{darkgreen}{green!70!black}

\title{Log-Sobolev inequality for 2D $\mathrm{SU}(N)$ lattice gauge theory at all couplings}

\author{Jack Piazza$^1$  and Hao Shen$^2$}

\institute{University of Wisconsin--Madison, \email{japiazza2@wisc.edu} \and 
University of Chicago, \email{pkushenhao@gmail.com}}

\begin{document}

\maketitle

\begin{abstract}
We prove that the log-Sobolev inequality holds for the two-dimensional $SU(N)$ Wilson lattice Yang--Mills measure with arbitrary fixed inverse coupling $\beta$. 
The log-Sobolev constant depends on $\beta$ and $N$, but is uniform in the volume of the underlying lattice. The proof relies on a two-scale disintegration argument.
\end{abstract}

\setcounter{tocdepth}{2}

\section{Introduction}

For $\beta\in\R$ and $N \geq 2$, consider the $G=\mathrm{SU}(N)$ Wilson lattice Yang--Mills measure on $\Lambda_L=(\Z/L\Z)^2$:
\begin{equ}\label{eq:YMmeasure}
    \mu_{L,N,\beta}(\dif U)
    =\frac1{Z_{L,N,\beta}}
    \exp\bigg\{N\beta\sum_{p\in\cP^+_L}\Re\Tr(U_p)\bigg\}
    \prod_{e\in E^+_L}\dif U_e
\end{equ}
where $\dif U$ is the normalized Haar probability measure on $G$ and $E^+_L$ and $\cP^+_L$ are the sets of positively oriented edges and plaquettes in $\Lambda_L$, respectively. We refer the reader to \cite{Chatterjee18} for an introduction to this measure and the relevant terminology.

\begin{theorem}\label{thm:main}
For every integer $N\ge2$ and every $\beta\in\R$ there exists $C_{N,\beta}>0$, depending only on $N$ and $\beta$, such that for every $L\ge1$ and every smooth $F:G^{E^+_L}\to\R$,
\begin{equ}\label{eq:mainLSI}
    \Ent_{\mu_{L,N,\beta}}(F^2)
    \le 2C_{N,\beta}
    \sum_{e\in E^+_L}\int |\nabla_eF|^2\,\dif\mu_{L,N,\beta}.
\end{equ}
\end{theorem}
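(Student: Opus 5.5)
We use a two-scale disintegration. The idea is to condition on a sparse set of edges so that the conditional measure factorises into independent finite-volume pieces, each with a volume-independent LSI, and then show that the marginal on the conditioned edges also satisfies an LSI uniform in $L$. The classical input is that for small $|\beta|$ (relative to $N$) the Bakry--Émery criterion applies directly. $G=\mathrm{SU}(N)$ has Ricci curvature $\frac{N}{2}$ (suitably normalised). The Hessian of $N\beta\sum_p\Re\Tr U_p$ is bounded in operator norm by $C N|\beta|$ times the number of plaquettes meeting an edge. So for $|\beta|\le \beta_0(N)$ we get $\Ric - \Hess V\ge c>0$ uniformly in $L$, and hence the LSI with constant $1/c$. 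For large $|\beta|$ this fails, and the goal is to reduce to a regime where either the effective coupling is small or the volume is bounded.

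\textbf{Step 1 (fine scale).} Fix a block size $K=K(N,\beta)$. Let $B$ be the set of edges lying on the boundaries of the $K\times K$ blocks tiling $\Lambda_L$ (assume $K\mid L$, or handle the remainder blocks similarly). Conditional on $U_B$, the interior edges of distinct blocks are independent. Each block is a lattice gauge theory on a $K\times K$ box with fixed boundary, so it is a Gibbs measure $e^{-V}$ on a compact manifold of bounded dimension. The Holley--Stroock perturbation of Haar measure gives an LSI with constant at most $e^{C N|\beta| K^2}$, uniformly over boundary conditions. Tensorisation then gives the conditional LSI with that constant, independent of $L$.

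\textbf{Step 2 (coarse scale).} In 2D we can integrate out the block interiors exactly. Gauge-fixing along a tree inside each block and using the convolution (heat-kernel-like) structure of the 2D Wilson action, the marginal of $U_B$ has density given by a product over blocks of $\phi_K(\text{holonomy around block boundary})$. Here $\phi_K$ is the $K^2$-fold convolution of the single-plaquette density $e^{N\beta\Re\Tr}$, a class function. As $K\to\infty$, $\phi_K$ converges to the constant $1$ in $C^2$, since the nontrivial character coefficients $(\hat\phi_1(\lambda)/\hat\phi_1(0))^{K^2}$ decay exponentially. So for $K$ large (depending on $N,\beta$), $\log\phi_K$ has arbitrarily small $C^2$ norm. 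The marginal is then a Haar-like measure on $G^B$ with a weak interaction, each boundary edge touching at most two blocks. Bakry--Émery applies as in the small-$\beta$ case, giving a uniform LSI for the marginal.

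\textbf{Step 3 (gluing).} This is the main obstacle: combining the marginal LSI and the conditional LSI requires controlling how the conditional entropy depends on $U_B$. Write
\[
\Ent_\mu(F^2)=\E\big[\Ent_{\mu(\cdot\mid U_B)}(F^2)\big]+\Ent_{\bar\mu}(\bar F^2),\qquad \bar F=\big(\E[F^2\mid U_B]\big)^{1/2}.
\]
The first term is bounded by Step 1. For the second, apply the marginal LSI, which needs $|\nabla_{B}\bar F|^2$. By the standard argument (e.g. Otto--Reznikoff / Lu--Yau style), this requires controlling $\nabla_{U_B}$ of the conditional expectation via covariance estimates, $\nabla_b\E[F^2\mid U_B]=\E[\nabla_bF^2\mid U_B]+\mathrm{Cov}(F^2,\nabla_b H)$. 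The covariance term is handled by the conditional LSI (hence a conditional Poincaré inequality) and the bounded derivative of the interaction between $b$ and its two adjacent blocks. This yields a factor depending only on $K,N,\beta$. Since each boundary edge interacts with only $O(1)$ blocks, summing gives a bound by $C\sum_e\int|\nabla_eF|^2$. An alternative, if needed, is to avoid covariance bounds by changing variables to block-boundary holonomies and using the exact independence structure of 2D gauge theory. The resulting $C_{N,\beta}$ depends on $K(N,\beta)$ but not on $L$, as required.
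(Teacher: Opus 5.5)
Your proposal is correct and follows essentially the same route as the paper: block decomposition, a marginal given by a product of $w^{*n}$ evaluated at block-boundary holonomies, $C^2$-smallness of $\log w^{*n}$ feeding Bakry--\'Emery, Holley--Stroock plus tensorization for the conditional laws, and gluing via the entropy chain rule with a covariance/block-Poincar\'e bound on the gradient of $\nu_s(F^2)$. The differences are minor: you get the $C^2$ decay from a character expansion instead of the paper's elementary splitting $w=(1-r)+rv$, and you condition only on block-boundary edges rather than also on the interior ``spikes.'' You also leave implicit the Hessian bound for $\psi\circ W_{\partial B}$; its gradient term and perimeter factor $4\sqrt{n}$ are absorbed by the exponential decay.
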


Here we recall that 
the entropy of a probability measure $\nu$ is defined as
\[
    \Ent_\nu(f^2)=\nu(f^2\log f^2)-\nu(f^2)\log\nu(f^2)
\]
with the convention $0\log0=0$. The measure satisfies a log-Sobolev inequality (LSI) if there is a constant $C > 0$ such that $\Ent_\nu(f^2)\le2C_{\mathrm{LS}}(\nu)\nu(|\nabla f|^2)$. for all smooth $f$, and denote by $C_{\mathrm{LS}}(\nu)$ the optimal constant in this inequality.

Log-Sobolev inequalities were introduced in \cite{MR0420249}, and have long been studied in the context of statistical mechanics models due to their connection to hypercontractivity and exponential relaxation for the associated Glauber or Langevin dynamics. See \cite{MR3155209, BBD24, GZ, MR1837286, zegarlinski1990log} for some background and foundational results on log-Sobolev inequalities and related functional inequalities arising in lattice models.

Poincar\'e and log-Sobolev inequalities for lattice Yang--Mills measures have been obtained in all spatial dimensions in \cite{SZZ22}, but only in the strong coupling regime where $\lvert \beta \rvert$ is sufficiently small. The current paper extends this to all $\beta \in \R$ in the two-dimensional case, where the geometry of the model is substantially simpler.
We remark that the Yang--Mills measure on $\R^2$ has decorrelated Wilson loops, namely the existence of a positive mass gap is essentially trivial in 2D. However LSI is a much stronger condition for a measure (c.f. \cite{SZZ22} which shows that one can derive the mass gap from LSI in the context of lattice Yang--Mills with small $\beta$).

Our argument relies on a spatial two-scale decomposition of the field. There is much prior literature using such spatial decompositions to prove log-Sobolev inequalities for other lattice models, see for example \cite{RolandSG,RolandPhi4,RolandIsing}. Our proof is motivated by the works \cite{OttoReznikoff2007,GOVW2009}, and has much in common with the simple approach taken in \cite{MR3926125}. A somewhat similar approach was used to study the lattice Yang--Mills--Higgs model in \cite{SZZ2024Higgs}, though there the decomposition is over different fields rather than different spatial scales, and the result is again only valid at strong coupling.

The special feature of the two-dimensional Yang--Mills model is that the small-scale integrations can be done by convolutions of probability weights on the Lie group (see \cite{Chevyrev19YM,MR5084118,MR4876989} for similar ideas). Moreover, in two dimensions, conditioning on the boundaries of the blocks (which are the ``large-scale'' variables),  the small-scale variables are independent across different blocks. In higher dimensions, conditional integrations would naturally produce nonlocal, larger Wilson loops, which would be much more difficult to analyze.

\begin{remark}
	The proof strategy we employ also works to prove an analogous result for $G = \mathrm{SO}(N)$ with $N \geq 3$. However, since we  rely on $G$ having uniform positive Ricci curvature, our method does not extend to $G=\mathrm{U}(1)$ (or equivalently, $G=\mathrm{SO}(2)$) without further ideas.
\end{remark}

\noindent \textbf{Acknowledgements:}
H.S. gratefully acknowledges financial support from the NSF grant CAREER DMS-2044415, and would like to thank Roland Bauerschmidt, who once mentioned the possibility of applying a one-step RG argument to 2D lattice Yang--Mills. We also thank Kevin Hu for many discussions on RG related problems.

\medskip

\noindent \textbf{AI disclosure:} Claude (Opus and Sonnet 5.5) was used for literature search and to verify and find references for some geometric formulas used in the proofs of Lemmas \ref{lem:C2decay} and \ref{lem:HessHbar}. The ideas, arguments, and writing in this paper are due to the authors.

\section{Convolution of the single-plaquette weight}

In our proof, we will divide the two-dimensional lattice into blocks, then perform conditional integrations inside each block. The resulting measures we obtain will be given by convolutions of weights associated to individual plaquettes.
The key observation is that convolving these weights makes the associated Hessian smaller.
Writing
\[
    w(U)=\frac{e^{N^2\beta\chi(U)}}{z_{N,\beta}},
    \qquad
     \chi(U)=\frac1N \Re \Tr (U),
    \qquad
    z_{N,\beta}=\int_G e^{N^2\beta\chi(V)}\,\dif V
\]
then $w$ is a  probability density, and since $\chi(U)\in [-1,1]$,
\begin{equ}[eq:bound-w]
    e^{-2t}\le w\le e^{2t} ,\qquad   t=N^2|\beta|.
\end{equ}
Let $w^{*k}$ denote the $k$-fold convolution of $w$, where
\[
    (f*g)(x)=\int_G f(y)g(y^{-1}x)\,\dif y.
\]
Recall that if $g$ is a probability density then $\| f*g\|_\infty \leq \|f\|_\infty$ for any bounded $f$. Also, the convolution of two probability densities is again a probability density, and in particular $1*g = 1$.

First, we show that $w^{*k}$ is asymptotically $1$ as $k\to \infty$. This is essentially because the random walk on $G$ with transition probability $w$ converges to being Haar distributed after many steps.
Write $r=1-\frac12 e^{-2t}$ and\footnote{The parameter $r\in (0,1)$ plays a key role in the contraction of the Hessian; note that $r\to 1$ as $\beta\to \infty$, so this contractive effect is weaker for larger $\beta$.} 
\[
w=(1-r) + r v ,\qquad v=(w-\frac12 e^{-2t})/r,
\]
noting that $v$ is also a probability density.

\begin{lemma}\label{lem:convdecomp}
For all $k \geq 2$,
\begin{equ}\label{eq:convdecomp}
    w^{*k} = 1+r^{k-1}(w-1)*v^{*(k-1)}.
\end{equ} 
\end{lemma}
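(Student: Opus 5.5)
We prove \eqref{eq:convdecomp} by induction on $k$, using only the algebraic rules for convolution recalled above. Convolution is bilinear and associative on bounded densities, and $1*g=g*1=1$ for any probability density $g$. In particular $1*v=1$, $(w-1)*1=1-1=0$, and $(w-1)*v^{*m}$ makes sense for every $m\ge0$.

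For the base case $k=2$ we substitute $w=(1-r)+rv$ into one factor only: $w*w=w*\big((1-r)+rv\big)=(1-r)\,w*1+r\,w*v=(1-r)+r\,w*v$. On the other hand, $1+r(w-1)*v=1+r\,w*v-r\,1*v=1-r+r\,w*v$. The two expressions agree, which is \eqref{eq:convdecomp} for $k=2$.

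For the inductive step, assume \eqref{eq:convdecomp} holds for some $k\ge2$. Convolving on the right with $w=(1-r)+rv$ gives
\[
w^{*(k+1)}=w^{*k}*w=1*w+r^{k-1}(w-1)*v^{*(k-1)}*\big((1-r)+rv\big).
\]
The first term is $1*w=1$. In the second term, the piece carrying $(1-r)$ is $(1-r)\,r^{k-1}(w-1)*v^{*(k-1)}*1$. Since $v^{*(k-1)}$ is a probability density, $v^{*(k-1)}*1=1$, and then $(w-1)*1=0$, so this piece vanishes. The remaining piece is $r^{k}(w-1)*v^{*k}$, which is exactly the claim for $k+1$.

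No step is a real obstacle. The only point needing care is the order of convolution, since $G$ is non-abelian. The argument uses just associativity and the identities $1*g=g*1=1$, and both sides of each identity hold for densities on any compact group with Haar measure. Concretely, $(1*g)(x)=\int g(y^{-1}x)\,\dif y=1$ and $(g*1)(x)=\int g(y)\,\dif y=1$.
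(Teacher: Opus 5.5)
Your proof is correct and is essentially the paper's argument. Both substitute $w=(1-r)+rv$ into the rightmost convolution factor and use $g*1=1*g=1$ for probability densities; the paper packages this as the one-step recursion $w^{*k}-1=r\,(w^{*(k-1)}-1)*v$ and iterates it, while you feed the closed form directly into the inductive step and remove the $(1-r)$ term via $(w-1)*1=0$.
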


\begin{proof}
We can write
\[
w^{*k} = w^{*(k-1)}*w = w^{*(k-1)}*((1-r) + r v )
=(1-r) + rw^{*(k-1)}*v.
\]
Therefore,
\[
w^{*k} - 1 = r (w^{*(k-1)}*v-1*v) = r (w^{*(k-1)}-1)*v.
\]
This gives \eqref{eq:convdecomp} for $k=2$, and the full claim follows by induction.
\end{proof}

Let $\mathfrak{g} = \mathfrak{su}(N)$ be the Lie algebra of $G$, equipped with the Hilbert--Schmidt inner product \footnote{We write $\dagger$ here for conjugate transpose  instead of $*$ since  $*$ is reserved for convolution.} $\langle X, Y \rangle = \Tr (X^\dagger Y)$.
We also write 
\[
\|\Hess f \|_{\infty} 
= \sup_{U\in H} \|\Hess f(U)\|_{\operatorname{op}}
= \sup_{U\in H}\sup_{\| X \|=1} |\Hess f(U)(X,X)|
\]
where $H$ is the Lie group on which $f$ is defined, which will always be either $G$ or some power of $G$, and $X$ is interpreted in expressions like $\Hess f(U) (X, X)$ as the left-invariant vector field associated to $X \in \mathfrak{g}$.

\begin{lemma}\label{lem:C2decay}
There exists $M$ (explicitly depending on $N$ and $\lvert \beta \rvert$) so that
\begin{equ}\label{eq:C2decay}
    \max\left\{
    \|w^{*k}-1\|_\infty,
    \| \nabla w^{*k} \|_\infty,
    \|\Hess w^{*k}\|_\infty
    \right\}
    \le Mr^{k-1}.
\end{equ}
\end{lemma}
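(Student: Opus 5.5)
The plan is to start from the identity of Lemma~\ref{lem:convdecomp}, $w^{*k}-1=r^{k-1}(w-1)*v^{*(k-1)}$, and to differentiate only the first factor of the convolution. Write $g=v^{*(k-1)}$, a probability density. For $(f*g)(x)=\int_G f(y)g(y^{-1}x)\,\dif y$, the $x$-dependence sits in $g$. We use unimodularity and the substitution $y\mapsto xz^{-1}$ to rewrite
\[
(f*g)(x)=\int_G f(xz^{-1})\,g(z)\,\dif z .
\]
Now $x$ appears only inside $f$, and it enters through right-translation by $z^{-1}$ followed by left multiplication by $x$.

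\textbf{Sup-norm bound.} This follows immediately: $\|w^{*k}-1\|_\infty\le r^{k-1}\|w-1\|_\infty\le r^{k-1}(e^{2t}+1)$ by \eqref{eq:bound-w}, and $\|f*g\|_\infty\le\|f\|_\infty$.

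\textbf{Gradient and Hessian.} A left-invariant vector field $X$ differentiates $x\mapsto f(xz^{-1})$ as
\[
\frac{d}{ds}f(xe^{sX}z^{-1}) = \frac{d}{ds}f\bigl(xz^{-1}e^{s\,\mathrm{Ad}_z X}\bigr),
\]
so
\[
X(f*g)(x)=\int_G (\mathrm{Ad}_zX)f(xz^{-1})\,g(z)\,\dif z .
\]
Since $\mathrm{Ad}_z$ is an isometry of the Hilbert--Schmidt inner product, this gives $\|\nabla (f*g)\|_\infty\le\|\nabla f\|_\infty$. Applying the same formula twice, the second derivative $XY(f*g)$ is the $g$-average of $(\mathrm{Ad}_zX)(\mathrm{Ad}_zY)f$ at $xz^{-1}$. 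The Hessian in the Levi-Civita sense for the bi-invariant metric is $\Hess f(X,Y)=\tfrac12(XY+YX)f$, because $\nabla_XY=\tfrac12[X,Y]$ for left-invariant fields. This symmetrized expression is also $\mathrm{Ad}$-equivariant, so $\|\Hess(f*g)\|_\infty\le\|\Hess f\|_\infty$.

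\textbf{Constants.} Taking $f=w-1$ and noting $\nabla(w-1)=\nabla w$ and $\Hess(w-1)=\Hess w$, it remains to bound $\|\nabla w\|_\infty$ and $\|\Hess w\|_\infty$. Since $w=e^{N^2\beta\chi}/z_{N,\beta}$, we have
\[
\nabla w = N^2\beta\, w\,\nabla\chi,\qquad \Hess w = w\bigl(N^2\beta\,\Hess\chi+N^4\beta^2\,\nabla\chi\otimes\nabla\chi\bigr).
\]
We have $w\le e^{2t}$, and $\chi$ is explicit: $X\chi(U)=\frac1N\Re\Tr(UX)$ and $XY\chi(U)=\frac1N\Re\Tr(UXY)$. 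Both are bounded by $\frac1N\|X\|\|Y\|$ (up to a factor $\sqrt N$ from norm comparisons), using unitarity of $U$ and $|\Tr(AB)|\le\|A\|\|B\|$. This yields an explicit
\[
M=\max\bigl\{e^{2t}+1,\;te^{2t}C_N,\;(t+t^2)e^{2t}C_N\bigr\}.
\]

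\textbf{Main obstacle.} The only delicate step is checking that the Hessian commutes correctly with the change of variables, that is, that $\Hess$ defined via left-invariant fields transforms covariantly under right translation with $\mathrm{Ad}$ acting on the directions. This is where bi-invariance of the metric on $G$ is used. If one prefers to avoid connections, one can instead bound the operator norm of the symmetrized second derivative $\tfrac12(XY+YX)$, which is the quantity appearing in $\|\Hess f\|_\infty$ as defined via $\Hess f(U)(X,X)$. Then only $X^2(f*g)=\int(\mathrm{Ad}_zX)^2f\,g$ is needed, which is immediate from the first-derivative formula applied twice.
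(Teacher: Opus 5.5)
Your proof is correct, but you push derivatives through the convolution differently from the paper.

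The paper never differentiates the closed form of Lemma~\ref{lem:convdecomp}. It uses the one-step recursion $w^{*k}=(1-r)+r\,v*w^{*(k-1)}$ and notes that a left-invariant derivative of a convolution lands on the right-hand factor, $\nabla_X(v*h)=v*\nabla_X h$. This gives $\|\nabla w^{*k}\|_\infty\le r\|\nabla w^{*(k-1)}\|_\infty$, and the same for $\Hess$ via $\Hess f(X,X)=\nabla_X\nabla_X f$. It then inducts down to explicit bounds on $\nabla w$ and $\Hess w$, the same ones you derive. (The paper actually writes the factor as $w^{*(k-1)}*v$ and differentiates as if $v$ were on the left. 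This is legitimate because $w$ and $v$ are class functions, so their convolutions commute; alternatively one can expand $w^{*k}=w*w^{*(k-1)}$ instead.)

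You instead keep the closed form $r^{k-1}(w-1)*v^{*(k-1)}$. There a left-invariant derivative would naturally fall on $v^{*(k-1)}$, whose derivatives are not controlled. You move it onto $w-1$ through the substitution $y=xz^{-1}$, at the cost of the conjugation $X\mapsto\mathrm{Ad}_zX$. That cost is harmless because $\mathrm{Ad}_z$ is a Hilbert--Schmidt isometry, and the symmetrized second derivative transforms covariantly in the same way, as you check.

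What your route buys is a slightly more general, order-insensitive contraction statement. Convolving a smooth function with any probability density, on either side, does not increase $\|\nabla\cdot\|_\infty$ or $\|\Hess\cdot\|_\infty$, and this uses no class-function structure. The paper's recursion is shorter and needs no $\mathrm{Ad}$ computation. Within your framework there is also a shortcut: centrality gives $(w-1)*v^{*(k-1)}=v^{*(k-1)}*(w-1)$, so the derivative falls directly on $w-1$. Your constants match the paper's up to inessential factors, such as $e^{2t}+1$ versus $e^{2t}$ in the sup-norm bound.
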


\begin{proof}
By Lemma~\ref{lem:convdecomp} and \eqref{eq:bound-w}
\[
\|w^{*k}-1\|_\infty \le r^{k-1} \|w-1\|_\infty 
\le r^{k-1} e^{2t}.
\]
Let $X$ be a unit vector in $\mathfrak{g}$. Then
\begin{equ}\label{eq:gradW}
	\nabla_X w (U) = w(U)  N \beta \Re \Tr (UX).
\end{equ}
By Cauchy--Schwarz, \[
	\lvert \Re \Tr (UX) \rvert \leq \| U^\dagger \| = \sqrt{N}
\]
so we have the uniform bound $\lvert \nabla_X w (U) \rvert \leq w(U) N^{3/2} \lvert \beta \rvert$ and thus, combining with \eqref{eq:bound-w}, $\| \nabla w \|_\infty \leq e^{2t} N^{3/2} \lvert \beta \rvert$.
For the convolution,
\[
	\nabla_X w^{*k} = \nabla_X ((1-r) + r w^{*(k-1)} * v) = rv*(\nabla_X (w^{*(k-1)}))
\]
so 
\[
	\| \nabla w^{*k} \|_\infty \leq r \| \nabla w^{*(k-1)}\|_\infty .
\]
By induction with the bound on $\| \nabla w \|_\infty$ as the base case, we obtain $\| \nabla w^{*k} \|_\infty \leq r^{k-1} e^{2t} N^{3/2} \lvert \beta \rvert$.

For the Hessian, we use the general formula 
\[
	\Hess w (U) (X, Y) = \nabla_X \nabla_Y w(U) - \frac{1}{2} \nabla_{[X, Y]} w(U)
\]
so in particular $\Hess w(U) (X, X) = \nabla_X \nabla_X w(U)$. Taking $X$ to be a unit vector in $\mathfrak{g}$, we compute
\[
	\nabla_X \nabla_X w(U) = w(U) N^2 \beta^2 (\Re \Tr (UX))^2 + w(U) N \beta \Re \Tr (U X^2) .
\]
Using the Cauchy--Schwarz bounds $\lvert \Re \Tr (UX) \rvert \leq \sqrt{N}$ and $\lvert \Re \Tr (U X^2) \rvert \leq 1$ (where the latter is obtained by decomposing $UX^2$ as $UX$ times $X$), we obtain
\[
	\lvert \Hess w(U) (X, X) \rvert \leq w(U) (N^3 \lvert \beta \rvert^2 + N \lvert \beta \rvert)
\]
and thus $\| \Hess w \|_\infty \leq e^{2t} (N^3 \lvert \beta \rvert^2 + N \lvert \beta \rvert)$. As with the gradient we have
\[
	\| \Hess w^{*k} \|_\infty \leq r \| \Hess w^{*(k-1)} \|_\infty
\]
so iterating yields $\| \Hess w^{*k} \|_\infty \leq r^{k-1} e^{2t} (N^3 \lvert \beta \rvert^2 + N \lvert \beta \rvert)$.
\end{proof}

For sufficiently large $n$ such that $Mr^{n-1} \leq 1/3$, we have $w^{*k} \geq 2/3$ for all $k \geq n$. Then for all $k \geq n$, Lemma \ref{lem:C2decay} implies
\begin{equ}\label{eq:logwk}
    \|\nabla\log w^{*k}\|_\infty
    +\|\Hess\log w^{*k}\|_\infty
    \le 4M r^{n-1}.
\end{equ}
Here we used the formula $\Hess \log f = (\Hess f)/f - (df\otimes df)/f^2$.

\section{Disintegration}

Let $B$ be an $m \times m$ block of the lattice and $n=m^2$. We first study the portion $\prod_{p\subset B} w(U_p)$ of the Yang--Mills measure coming from plaquettes in $B$.

Consider an ``axial'' spanning tree over the centers of all the plaquettes $p\subset B$, as on the left diagram of Figure \ref{fig:axial-tree}. Let $I_B$ be those edges which cross the spanning tree. We only integrate the edges in $I_B$,
which is a convolution
\begin{equ}\label{eq:blockconvolution}
    \int_{G^{I_B}}\prod_{p\subset B} w(U_p)\,\prod_{e\in I_B}\dif U_e
    =w^{*n}(W_B(s))
\end{equ}
where $W_B$ is a word over $G$ made from the edge variables
\[
s\in S_B \eqdef I_B^c = \{e: \mbox{$e$ belongs to a plaquette in $B$}\} \setminus I_B.
\]
See the right side of Figure \ref{fig:axial-tree} for an idea of what the edge sets $I_B$ and $S_B$ look like.
For example, if $I_B$ contained just one edge the integral would look like
\[
\int_G w(AuB) w(Cu^{-1}D) \dif u = w^{*2} (BADC).
\]
Before any reduction, $W_B$ has $2n+2$ letters. Indeed, the density originally contains $4n$ occurrences of edge variables (four for each plaquette); there are $n-1$ edges in $I_B$, one for each edge of the spanning tree, and each integration of an edge in $I_B$ removes two letters. 

In fact, the word $W_B$ has many redundancies, and can be reduced to the length $4\sqrt{n}$ word $W_{\partial B} (s)$ tracing the boundary $\partial B$ of the block by canceling adjacent occurrences of edges and their inverses. We thus take $W_B (s) = W_{\partial B} (s)$ from here on.

\begin{remark}
	In the above, one could take any spanning tree over the centers of the plaquettes in $B$ rather than the axial spanning tree. We choose the axial tree because it is clear in that case that one has $W_B (s) = W_{\partial B} (s)$, but one can show that this claim holds for any spanning tree with an elementary geometric argument.
\end{remark}

\begin{figure}
\centering
\begin{tikzpicture}
	\foreach \i in {0,...,4} {
        \draw[gray!60, thin] (\i, 0) -- (\i, 4);
        \draw[gray!60, thin] (0, \i) -- (4, \i);
    }
    \draw[black, thick] (.5, 3.5) -- (3.5, 3.5);
    \draw[black, thick] (.5, .5) -- (.5, 3.5);
    \draw[black, thick] (1.5, .5) -- (1.5, 3.5);
    \draw[black, thick] (2.5, .5) -- (2.5, 3.5);
    \draw[black, thick] (3.5, .5) -- (3.5, 3.5);
\end{tikzpicture}
\qquad\qquad
\begin{tikzpicture}
	\foreach \i in {0,...,4} {
        \draw[gray!60, thin] (\i, 0) -- (\i, 4);
        \draw[gray!60, thin] (0, \i) -- (4, \i);
    }
    
    \draw[red, dotted, thick] (0, 1) -- (4, 1);
    \draw[red, dotted, thick] (0, 2) -- (4, 2);
    \draw[red, dotted, thick] (0, 3) -- (4, 3);
    \draw[red, dotted, thick] (1, 3) -- (1, 4);
    \draw[red, dotted, thick] (2, 3) -- (2, 4);
    \draw[red, dotted, thick] (3, 3) -- (3, 4);
    
    \draw[blue, thick] (0, 0) -- (0, 4) -- (4, 4) -- (4, 0) -- (0, 0);
    \draw[blue, thick] (1, 0) -- (1, 3);
    \draw[blue, thick] (2, 0) -- (2, 3);
    \draw[blue, thick] (3, 0) -- (3, 3);
\end{tikzpicture}
\caption{Left: the axial spanning tree over the centers of plaquettes in the block. Right: the edges in $I_B$ (dotted red) and in $S_B$ (solid blue). The $2n+2$-letter word formed by the blue edges reduces to the word $W_{\partial B}$, since all of the vertical ``spikes'' on the interior of the block correspond to subwords which reduce to the identity.}
\label{fig:axial-tree}
\end{figure}

Now we disintegrate. We pave our lattice $\Lambda_L$ by blocks of size $n=m\times m$.\footnote{Later, we will fix a large enough $m$. If $L \geq 2m$ is not a multiple of $m
$, then we write $L=m+\cdots+m+k$ with $k\in [m,2m-1)$ and pave $\Lambda_L$ by blocks of size $m\times m$ except for one row and column. For the remainder, we use blocks of size $k\times m$, $m\times k$, or $k\times k$, and all our bounds obviously hold on these blocks as well. If $L < 2m$ (or more generally for any finitely many small values of $L$) the blocking procedure is unnecessary, and the log-Sobolev inequality follows directly from the Holley--Stroock criterion as in Section \ref{sec:conditional-LSI}.}
Define
\[
I=\bigcup_{B}I_B,\qquad S= \bigcup_B S_B = I^c
\]
so that $U=((U_e)_{e\in I}, (U_e)_{e\in S})=: (x,s)$.
Then
\[
\mu(\dif s,\dif x)=\bar\mu(\dif s) \nu_s(\dif x)
\]
where the  marginal law $\bar \mu$ of the remaining edges is
\begin{equ}\label{eq:marginal}
    \bar\mu(\dif s)
    =\frac1{\bar Z}
    \prod_{B} w^{*n}(W_{\partial B}(s))\, \dif s
\end{equ}
and the conditional law $\nu_s$ is
\begin{equ}\label{eq:conditional-blocks}
    \nu_s(\dif x)=\bigotimes_{B}\nu_{B,s}(\dif x_B),\qquad \nu_{B,s}(\dif  x_B)
    =\frac{\prod_{p\subset B}w(U_p)}{w^{*n}(W_{\partial B}(s))}\,\dif x_B.
\end{equ}

\section{Log-Sobolev inequality for the marginal law}

Let
\[
\bar\mu(ds)=\bar Z^{-1}e^{\bar H(s)} ds,
\qquad
    \bar H(s)=\sum_{B}\log w^{*n}(W_{\partial B}(s)).
\]
Let $\psi (g) = \log w^{*n}(g)$ so that $\bar H=\sum_B \psi\circ W_{\partial B} = \sum_B \bar{H}_B$. We prove the following bound on the Hessian.

\begin{lemma}\label{lem:HessHbar}
	The following inequality holds:
	\[
		\| \Hess \bar{H}_B \|_\infty \leq 4 \sqrt{n} (\| \nabla \psi \|_\infty + \| \Hess \psi \|_\infty) .
	\]	
	Consequently,
	\[
		\| \Hess \bar{H} \|_\infty \leq 8 \sqrt{n} (\| \nabla \psi \|_\infty + \| \Hess \psi \|_\infty) .
	\]	
\end{lemma}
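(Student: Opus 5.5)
The plan is to compute the Hessian of the composition $\bar H_B=\psi\circ W_{\partial B}$ with the chain rule on the product group $G^{S_B}$. Only the $4\sqrt n$ boundary edges of $B$ enter $W_{\partial B}$, so the relevant map is $W:G^{4\sqrt n}\to G$, $W(g_1,\dots,g_\ell)=g_1^{\epsilon_1}\cdots g_\ell^{\epsilon_\ell}$ with $\ell=4\sqrt n$ and $\epsilon_i=\pm1$. Fix a point $g$ and a tangent vector $X=(X_1,\dots,X_\ell)$, each $X_i\in\mfg$ a left-invariant direction, with $\sum_i\|X_i\|^2=1$. Consider the curve $\gamma(\tau)=(g_ie^{\tau X_i})_i$. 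For a product-type (bi-invariant metric) group, $\Hess \bar H_B(g)(X,X)=\frac{d^2}{d\tau^2}\bar H_B(\gamma(\tau))|_{\tau=0}$, because these curves are geodesics. So it suffices to bound the second derivative of $\psi(W(\gamma(\tau)))$.

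First, write $W(\gamma(\tau))=W(g)\,\eta(\tau)$ with $\eta(0)=I$. Here $\eta(\tau)$ is a product of the factors $h_i e^{\pm\tau X_i}h_i^{-1}$, with the $h_i$ fixed group elements (suffixes of the word). Equivalently, $\eta(\tau)=\prod_i e^{\tau Y_i}$ with $Y_i=\pm\mathrm{Ad}_{h_i}X_i$, and $\|Y_i\|=\|X_i\|$ because $\mathrm{Ad}$ is an isometry for the Hilbert--Schmidt product. Then
\[
\frac{d}{d\tau}\psi(W\eta)\Big|_0=\nabla_{Y}\psi(W),\qquad \frac{d^2}{d\tau^2}\psi(W\eta)\Big|_0=\Hess\psi(W)(Y,Y)+\nabla_{Z}\psi(W),
\]
where $Y=\sum_iY_i$. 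The element $Z$ is the second-order correction from the BCH formula, $\eta(\tau)=\exp(\tau Y+\frac{\tau^2}{2}\sum_{i<j}[Y_i,Y_j]+O(\tau^3))$, so $Z=\sum_{i<j}[Y_i,Y_j]$. On the exponential-curve side, the second derivative of $\psi$ along $e^{\tau Y+\tau^2Z/2}$ equals $\Hess\psi(Y,Y)+\nabla_Z\psi$.

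Next, bound the two terms. By Cauchy--Schwarz, $\|Y\|\le\sum_i\|Y_i\|\le\sqrt\ell$, so $|\Hess\psi(Y,Y)|\le \ell\|\Hess\psi\|_\infty=4\sqrt n\|\Hess\psi\|_\infty$. For $Z$, the bound $\|[A,B]\|\le 2\|A\|\|B\|$ gives $\|Z\|\le\sum_{i<j}2\|Y_i\|\|Y_j\|\le(\sum_i\|Y_i\|)^2\le\ell$. Hence $|\nabla_Z\psi|\le4\sqrt n\|\nabla\psi\|_\infty$, and together these give the first claim.

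For $\bar H$, the Hessian is the sum over blocks of $\Hess\bar H_B$. Each edge in $S$ belongs to the boundaries of at most two blocks, since interior spike edges lie in one block only. Write $X^{(B)}$ for the restriction of the unit vector $X$ to the edges of $\partial B$ (this is the piece on which $\Hess\bar H_B$ acts). Then
\[
|\Hess\bar H(X,X)|\le\sum_B\|\Hess\bar H_B\|_\infty\|X^{(B)}\|^2\le 2\sup_B\|\Hess\bar H_B\|_\infty ,
\]
because $\sum_B\|X^{(B)}\|^2\le2\|X\|^2$. This gives the factor $8\sqrt n$.

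The main obstacle is the second-order step: correctly identifying the Hessian along non-commuting product curves and controlling the commutator term $Z$. I would handle it by working with left-invariant frames, where bi-invariance makes $t\mapsto ge^{tX}$ a geodesic. The key fact is that the $\|\nabla\psi\|$ contribution is of order $\ell$ rather than $\ell^2$; this follows from $\|Z\|\le(\sum\|Y_i\|)^2\le\ell$.
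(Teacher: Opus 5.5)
Your proposal is correct and takes essentially the same route as the paper. Both use the geodesic $\gamma(\tau)=(g_ie^{\tau X_i})$ and rewrite $W(\gamma(\tau))=W(g)\,e^{\tau Y_1}\cdots e^{\tau Y_\ell}$ with $\mathrm{Ad}$-conjugated $Y_i$. Second-order BCH then gives the correction $\sum_{i<j}[Y_i,Y_j]$, and the bounds $\|Y\|^2\le\ell$ and $\|Z\|\le\ell$ finish the block estimate. Each boundary edge lying in at most two blocks supplies the factor $2$.

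The only cosmetic differences are two. You extract the second-order term by the chain rule in exponential coordinates at $W$ rather than via $\nabla_{\dot c(0)}\dot c(0)$, and you track the inverse letters ($\epsilon_i=-1$) explicitly, which the paper glosses over.
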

\begin{proof}
Fix a block $B$. Let $P = 4\sqrt{n}$ be the length of $W_{\partial B}$ (the perimeter of the block), and write $W$ for $W_{\partial B}$. Note that $W$ is just a product
\[
	W(U_1, \dots, U_P) = W_{U_1} \dots W_{U_P} .
\]
Recall that the Hessian of $\psi \circ W$ can be defined via
\[
	\Hess (\psi \circ W) (X, X) = \frac{d^2}{dt^2} (\psi (W (\gamma (t)))) \Big|_{t=0}
\]
where $\gamma$ is a geodesic on $G^P$ with $\gamma (0) = U$ and $\dot{\gamma} (0) = X$ (see \cite[Chapter 5]{AMS08} for an introduction to the relevant notions). More precisely, we take $\gamma (t) = (U_1 e^{tX_1}, \dots, U_P e^{tX_P})$. Letting $c(t) = W(\gamma (t))$, this becomes
\begin{equ}\label{eqn:hess-geodesic}
	\Hess (\psi \circ W) (X, X) = \Hess \psi (\dot{c} (0), \dot{c} (0)) + \langle \nabla \psi, \nabla_{\dot{c} (0)} \dot{c} (0) \rangle .
\end{equ}
Note that $c$ is not a geodesic on $G$, so the second term is nonzero in general.

To bound the first term, we need to find $\Hess \psi (dW(X), dW(X))$. First we compute $dW(X)$ with $X = (X_1, \dots, X_P)$. This gives
\begin{equs}\label{eq:dW}
	dW(X) &= \frac{d}{dt} \Big|_{t=0} U_1 e^{tX_1} \dots U_P e^{tX_P} = \frac{d}{dt} \Big|_{t=0} U_1 U_2 \dots U_P e^{tZ_1} \dots e^{tZ_P} 
	\\
	&= U_1 U_2 \dots U_P (Z_1 + \dots + Z_P)
\end{equs}
where $Z_P = X_P$, $Z_{P-1} = U_P^{-1} X_{P-1} U_P$, and so on, down to $Z_1 = (U_2 \dots U_P)^{-1} X_1 (U_2 \dots U_P)$. Therefore,
\[
	\lvert \Hess \psi (dW(X), dW(X)) \rvert \leq \| \Hess \psi \|_\infty \| Z_1 + \dots + Z_P \|^2 \leq P \| \Hess \psi \|_\infty \| X \|^2 .
\]

For the second term, we must compute $\nabla_{\dot{c} (0)} \dot{c} (0)$. First, we write $c(t)$ as $U_1 \dots U_P \exp (z(t))$ for some curve $z(t)$. We have by the Baker--Campbell--Hausdorff formula that 
\[
	c(t) = U_1 e^{tX_1} \dots U_P e^{tX_P} = U_1 \dots U_P e^{tZ_1} \dots e^{t Z_P} = U_1 \dots U_P \exp (z(t))
\]
where
\[
	z(t) = t \sum_i Z_i + \frac{t^2}{2} \sum_{i<j} [Z_i, Z_j] + O(t^3).
\]
By \cite[Theorem 5.4]{Hall15}, this implies
\[
	\dot{c}(t) = U_1 \dots U_P e^{z(t)} \Big( z'(t) - \frac{[z(t), z'(t)]}{2!} + \frac{[z(t), [z(t), z'(t)]]}{3!} + \dots \Big) =: U_1 \dots U_P e^{z(t)} Y(t)
\]
and in particular $\dot{c} (0) = U_1 \dots U_P z'(0)$. Therefore,
\begin{equs}
	\nabla_{\dot{c}(0)} \dot{c}(0) = U_1 \dots U_P e^{z(0)} Y'(0) + \frac{1}{2} U_1 \dots U_P [z'(0), Y(0)].
\end{equs}
One can compute from the definition that $Y(t) = z'(t) + O(t^2)$, so
\begin{equ}
	\nabla_{\dot{c}(0)} \dot{c}(0) = U_1 \dots U_P z''(0) = U_1 \dots U_P \sum_{i < j} [Z_i, Z_j].
\end{equ}
Thus, the second term of \eqref{eqn:hess-geodesic} is bounded by 
\[
	\lvert \langle \nabla \psi, \nabla_{\dot{c} (0)} \dot{c} (0) \rangle \rvert \leq \| \nabla \psi \|_\infty \sum_{i < j} \| [Z_i, Z_j] \| \leq P \| \nabla \psi \|_\infty \| X \|^2 .
\]
Here we used 
\[
\sum_{i<j}\|[Z_i,Z_j]\|
\le 2 \sum_{i<j}\|Z_i\|\|Z_j\|
\le P \sum_i \|Z_i\|^2.
\]
From this and \eqref{eqn:hess-geodesic} we can conclude the bound on $\Hess \bar{H}_B$.

The bound on $\Hess \bar{H}$ follows immediately, since $\bar{H} = \sum_B \bar{H}_B$ and each edge is contained in at most two blocks. Indeed, one has
\[
	\lvert \Hess \bar{H} (X, X) \rvert =\Big\lvert \sum_B \Hess \bar{H}_B (X, X) \Big\rvert = \Big\lvert \sum_B \Hess \bar{H}_B (X_B, X_B) \Big\rvert \leq C \sum_B \|X_B \|^2
\]
where $C$ is the upper bound from the first part of the lemma, and $X_B$ consists of only components of $X$ in the directions of edges in $B$. But since each edge occurs in at most two blocks, this last sum is at most $2 \sum_e \| X_e \|^2 = 2 \| X \|^2$.
\end{proof}

Now we apply \eqref{eq:logwk} to the right-hand side of the bound in Lemma~\ref{lem:HessHbar}. If $n$ is large enough,
\[
|\Hess \bar H(X,X)|\le \frac12 \|X\|^2.
\]
Note that unlike the original Yang-Mills action whose Hessian bound would be proportional to $\beta$, the bound here does not depend on $\beta$.
On the other hand, $\Ric_G=\frac{N}2 g$ where $g$ is the metric. So
\[
\Ric_G - \Hess \bar H \ge \frac12 g.
\]
Hence, by the Bakry--\'Emery criterion~\cite{BE},
\begin{equ}\label{eq:marginalLSI}
    \Ent_{\bar\mu}(f^2)
    \le 4 \sum_{e\in S} \bar\mu (|\nabla_e f|^2 ).
\end{equ}

\section{Uniform log-Sobolev inequality for the conditional laws}\label{sec:conditional-LSI}

For the conditional laws $\nu_s$, we use the Holley--Stroock criterion~\cite{HolleyStroock1987}:
 If $\nu$ satisfies an LSI with constant $C$ and
\[
    \dif \widetilde\nu=Z^{-1}e^V\,\dif \nu,
\]
then
\begin{equ}\label{eq:HSperturb}
    C_{\mathrm{LS}}(\widetilde\nu)
    \le e^{\osc V} C_{\mathrm{LS}}(\nu),
    \qquad \osc V:=\sup V-\inf V.
\end{equ}

By the Bakry--\'Emery criterion,
a finite Haar product $\prod_e \dif U_e$ satisfies
\begin{equ}\label{eq:HaarLSI}
    \Ent(f^2)
    \le \frac4N\sum_{j}\int |\nabla_jf|^2\,\prod_e \dif U_e
    \le 2\sum_{j}\int |\nabla_jf|^2\,\prod_e \dif U_e.
\end{equ}

For a block $B$, set 
\[
    H_B(s_B ,x_B)=\sum_{p\subset B}\log w(U_p).
\]
Since each plaquette contributes an oscillation at most $4t$, we have
\begin{equ}\label{eq:block-oscillation}
    \osc_{x_B} H_B(s_B,\cdot)\le 4tn.
\end{equ}

Therefore,
\begin{equ}\label{eq:conditionalLSI}
    \Ent_{\nu_s}(F^2)
    \le 2K\,\nu_s\Big(\sum_{i\in I}|\nabla_iF|^2\Big),
    \qquad
     K=e^{4tn}
\end{equ}
which follows for each block measure $\nu_{B, s}$ by the Holley--Stroock criterion,
and then for the full measure $\nu_s$ by tensorization (c.f. \cite{GOVW2009}). 
Note in particular that a log-Sobolev inequality implies a Poincar\'e inequality with the same constant, so for each block we have
\begin{equ}[eq:blockPoincare]
    \Var_{\nu_{B,s}}(f)
    \le K\,\nu_{B,s}\Big(\sum_{i\in I_B}|\nabla_if|^2\Big).
\end{equ}

\section{Proof of the main result}

Recall that $\mu (\dif s \, \dif x) = \bar\mu (\dif s) \nu_s (\dif x)$. By the chain rule for relative entropy,
\begin{equ}\label{eq:ent-decomp}
    \Ent_\mu(F^2)
    =\bar\mu\bigl(\Ent_{\nu_s}(F^2)\bigr)
     +\Ent_{\bar\mu}\bigl(\nu_s(F^2)\bigr).
\end{equ}
The first term in \eqref{eq:ent-decomp} is bounded by \eqref{eq:conditionalLSI}:
\begin{equ}\label{eq:first-entropy}
    \bar\mu\bigl(\Ent_{\nu_s}(F^2)\bigr)
    \le2K\sum_{i\in I}\mu(|\nabla_iF|^2).
\end{equ}
Next we bound the second term in \eqref{eq:ent-decomp}. By \eqref{eq:marginalLSI} we know that
\[
    \Ent_{\bar\mu}(f^2)
    \le 4 \sum_{e\in S} \bar\mu (|\nabla_e f |^2)
\]
so we need to bound $\sum_{e\in S}|\nabla_e f |^2$ where $f$ corresponds to $\sqrt{\nu_s(F^2)}$ in \eqref{eq:ent-decomp}. For $\eta>0$, define
\[
    G_\eta(s)=\sqrt{\nu_s(F^2)+\eta}.
\]

\begin{lemma}
There is an absolute constant $C>0$ so that
\begin{equ}\label{eq:gradientCE}
    \sum_{e\in S}|\nabla_eG_\eta|^2
    \le 2\sum_{e\in S} \nu_s(|\nabla_eF|^2)
     + CK t^2 n \sum_{i\in I} \nu_s (|\nabla_i F|^2 ).
\end{equ}
\end{lemma}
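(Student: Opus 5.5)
We will differentiate $\nu_s(F^2)$ in the slow variables and split the result into a "direct" term and a "covariance" term, using the explicit density \eqref{eq:conditional-blocks}. Fix $e\in S$ and write $\rho_s(x)=\prod_B \prod_{p\subset B}w(U_p)/w^{*n}(W_{\partial B}(s))$ for the density of $\nu_s$. Then
\[
\nabla_e \nu_s(F^2)=2\nu_s(F\nabla_eF)+\mathrm{Cov}_{\nu_s}\bigl(F^2,\nabla_e\log\rho_s\bigr).
\]
The term $\nabla_e\log w^{*n}(W_{\partial B}(s))$ does not depend on $x$, so it drops out of the covariance. What remains is
\[
V_e:=\sum_{p\ni e}\nabla_e\log w(U_p).
\]
This is a sum over at most two plaquettes, and it depends on $x$ only through $x_{B_e}$, where $B_e$ is the union of the (at most two) blocks containing $e$. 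By \eqref{eq:gradW} and $N^{3/2}|\beta|\le t$, we have $\|V_e\|_\infty\le 2t$.

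Since $\nabla_eG_\eta=\nabla_e\nu_s(F^2)/(2G_\eta)$ and $G_\eta\ge\nu_s(F^2)^{1/2}$, Cauchy--Schwarz bounds the first term by $\nu_s(|\nabla_eF|^2)^{1/2}$.

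For the covariance term we first integrate out the blocks not in $B_e$. Let $h=\bigl(\nu_{\mathrm{rest}}(F^2)\bigr)^{1/2}$, a function of $x_{B_e}$. Then $\mathrm{Cov}_{\nu_s}(F^2,V_e)=\mathrm{Cov}_{\nu_{B_e,s}}(h^2,V_e)$. Set $a=\nu_{B_e,s}(h)$ and write $h^2-a^2=(h-a)(h+a)$. This gives
\[
|\mathrm{Cov}(h^2,V_e)|\le 2\|V_e\|_\infty\,\Var_{\nu_{B_e,s}}(h)^{1/2}\,\nu(( h+a)^2)^{1/2}\le 8t\,\Var_{\nu_{B_e,s}}(h)^{1/2}\,\nu_s(F^2)^{1/2}.
\]
Dividing by $2G_\eta$ removes the factor $\nu_s(F^2)^{1/2}$. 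To bound $\Var(h)$ we apply the block Poincaré inequality \eqref{eq:blockPoincare}, tensorized over the at most two blocks in $B_e$, so the constant remains $K$. Together with the standard bound $|\nabla_i h|^2\le \nu_{\mathrm{rest}}(|\nabla_iF|^2)$ (Cauchy--Schwarz, regularizing with $\eta$ if needed), this yields
\[
|\nabla_eG_\eta|\le \nu_s(|\nabla_eF|^2)^{1/2}+4t\Bigl(K\sum_{i\in I\cap B_e}\nu_s(|\nabla_iF|^2)\Bigr)^{1/2}.
\]

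Finally, we square using $(a+b)^2\le 2a^2+2b^2$ and sum over $e\in S$. Each block contains $O(n)$ edges of $S$, and each $e$ touches at most two blocks. Hence each $i\in I$ appears in the resulting double sum at most $Cn$ times, which gives \eqref{eq:gradientCE} with an absolute constant $C$.

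The main obstacle is the covariance step. One must avoid picking up the full conditional variance over all blocks, which would destroy the locality needed for the factor $n$ rather than the volume, and one must convert $\mathrm{Cov}(F^2,V)$ into a bound linear in $\nu_s(F^2)^{1/2}$. The plan handles both points by conditioning on the complementary blocks before applying the $(h-a)(h+a)$ factorization.
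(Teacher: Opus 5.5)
Your proposal is correct and follows essentially the paper's argument. You split $\nabla_e\nu_s(F^2)$ into the direct term and a covariance with the local plaquette gradient (bounded by $2t$), localize to the block(s) containing $e$, use the $(F-a)(F+a)$ factorization together with the block Poincar\'e inequality, and count that each $i\in I$ is hit $O(n)$ times. The only cosmetic difference is the order of integration. You integrate out the complementary blocks first and work with $h=\nu_{\mathrm{rest}}(F^2)^{1/2}$, using $|\nabla_i h|^2\le\nu_{\mathrm{rest}}(|\nabla_iF|^2)$. The paper instead freezes $x_{R^c}$, applies the covariance and Poincar\'e bounds to $F$ itself under $\nu_{R,s}$, and then uses Cauchy--Schwarz in $\nu_{R^c,s}$; both routes give the same constants.
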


\begin{proof}

We start by computing derivatives of $G_\eta$:
\begin{equ}\label{eq:gradG}
	\nabla_e G_\eta = \frac{\nabla_e (\nu_s F^2)}{2\sqrt{\nu_s (F^2) + \eta}} \;.
\end{equ}
We can write
\begin{equ}
	\nu_s (F^2) = \frac{\int F^2 e^{H} \, \dif x}{\int e^{H} \, \dif x}
\end{equ}
where $H(x, s) = N \beta \sum_{p \in \CP^+_L} \Re \Tr (U_p)$. Then
\begin{equs}\label{eq:gradFCov}
	\nabla_e \big(\nu_s (F^2)\big) &= \nu_s \big(\nabla_e (F^2)\big) - \nu_s (F^2 \nabla_e H) + \nu_s (F^2) \nu_s (\nabla_e H) \\
	&= \nu_s \big(\nabla_e (F^2)\big) - \nu_s (F^2 \nabla_e H_R) + \nu_s (F^2) \nu_s (\nabla_e H_R)
\end{equs}
where $R$ is the block, or union of two blocks if $e$ lies at the boundary of a block, containing $e$, and $H_R = -N \beta \sum_{p \subset R} \Re \Tr (U_p)$. 

We can decompose $\nu_s$ as $\nu_{R^c, s} \otimes \nu_{R, s}$ where the first measure contains the product over all blocks not in $R$, and the second contains the product over all blocks in $R$. Then
\begin{equ}\label{eq:Rsplit}
	\nu_s (F^2) \nu_s (\nabla_e H_R) = \nu_{R^c, s} \big(\nu_{R, s} (F^2)\big) \nu_{R^c, s} \big(\nu_{R, s} ( \nabla_e H_R)\big) = \nu_{R^c, s} \big(\nu_{R, s} (F^2) \nu_{R, s} (\nabla_e H_R)\big)
\end{equ}
where the last equality follows because $\nabla_e H_R$ is independent of the variables in $R^c$. We thus have
\begin{equs}
	&\lvert \nu_{R, s} (F^2 \nabla_e H_R) - \nu_{R, s} (F^2) \nu_{R, s} (\nabla_e H_R) \rvert \\
	&= \Big\lvert \nu_{R, s} \Big(\big(F - \nu_{R, s} (F)\big) \big(F + \nu_{R, s} (F)\big) \big(\nabla_e H_R - \nu_{R, s} (\nabla_e H_R)\big)\Big) \Big\rvert \\
	&\leq 2\| \nabla_e H_R \|_\infty \nu_{R, s} \Big(\big\lvert \big(F - \nu_{R, s} (F)\big) \big(F + \nu_{R, s} (F)\big) \big\rvert\Big) \\
	&\leq 4\| \nabla_e H_R \|_\infty \mathrm{Var}_{\nu_{R, s}} (F)^{1/2} \nu_{R, s} (F^2)^{1/2}.
\end{equs}
Combining \eqref{eq:gradFCov} and \eqref{eq:Rsplit}, we obtain
\begin{equs}
	\lvert \nabla_e (\nu_s (F^2)) \rvert &\leq 2 \nu_s (F^2)^{1/2} \nu_s (\lvert \nabla_e F \rvert^2)^{1/2} + 4\| \nabla_e H_R \|_\infty \nu_{R^c, s} \big(\mathrm{Var}_{\nu_{R, s}} (F)^{1/2} \nu_{R, s} (F^2)^{1/2}\big) \\
	&\leq 2 \nu_s (F^2)^{1/2} \nu_s (\lvert \nabla_e F \rvert^2)^{1/2} + 4\sqrt{K} \| \nabla_e H_R \|_\infty \nu_{R^c, s}\big( \nu_{R, s} (\lvert \nabla_x F \rvert^2)^{1/2} \nu_{R, s} (F^2)^{1/2})\big) \\
	&\leq 2 \nu_s (F^2)^{1/2} \nu_s (\lvert \nabla_e F \rvert^2)^{1/2} + 4\sqrt{K} \| \nabla_e H_R \|_\infty \nu_s (\lvert \nabla_x F \rvert^2)^{1/2} \nu_s (F^2)^{1/2}
\end{equs}
where $\nabla_x$ is the gradient in the directions of $I_R$. In the first and third inequalities above we apply Cauchy--Schwarz, and in the second we use the Poincar\'e inequality \eqref{eq:blockPoincare} for $\nu_{R, s}$. Applying this bound in \eqref{eq:gradG} yields
\[
	\lvert \nabla_e G_\eta \rvert \leq  \nu_s (\lvert \nabla_e F \rvert^2)^{1/2} +2\sqrt{K} \| \nabla_e H_R \|_\infty \nu_s (\lvert \nabla_x F \rvert^2)^{1/2} \;.
\]
Squaring both sides,
\[
	\lvert \nabla_e G_\eta \rvert^2 \leq  2 \nu_s (\lvert \nabla_e F \rvert^2) + 8K \| \nabla_e H_R \|^2_\infty \nu_s (\lvert \nabla_x F \rvert^2) \;.
\]
Summing over $e$ and using that $\| \nabla_e H_R \|_\infty \leq 2 N^{3/2} \lvert \beta \rvert \leq 2t$ by a similar calculation to \eqref{eq:gradW}, we find
\[
	\sum_{e \in S} \lvert \nabla_e G_\eta \rvert^2 \leq 2 \sum_{e \in S} \nu_s (\lvert \nabla_e F \rvert^2) + 32K t^2 (n+1) \sum_{i \in I} \nu_s (\lvert \nabla_i F \rvert^2)
\]
where the $n+1$ arises from the fact that each $\nabla_i F$ term shows up once for each $e \in S$ in the same block as $i$. 

\end{proof}

Since $F^2$ and the density of $\nu_s$ both depend smoothly on $s$, we can take a limit of this bound as $\eta \to 0$ and obtain that \eqref{eq:gradientCE} also holds if $G_\eta (s)$ is replaced by $\sqrt{\nu_s (F^2)}$. Thus, we obtain
\begin{equ}\label{eq:second-entropy}
    \Ent_{\bar\mu}\bigl(\nu_s(F^2)\bigr)
    \le8\sum_{e\in S}\mu(|\nabla_eF|^2)
      +CK t^2 n \sum_{i\in I}\mu(|\nabla_iF|^2).
\end{equ}
Combining \eqref{eq:ent-decomp}, \eqref{eq:first-entropy}, and \eqref{eq:second-entropy}, we obtain Theorem~\ref{thm:main}.

\section{Discussion on dependence of the LSI constant on $\beta$}

In Theorem~\ref{thm:main} our LSI constant $C_{N,\beta}\to \infty$ as $\beta\to\infty$. While our constant is certainly not optimal, this divergence does not appear to be merely a technical condition.
We do not expect, in general, that the LSI holds with a constant uniform in $\beta$. 
To see why, consider a probability measure $\mu$ on any Lie group with Haar density $\frac{1}{Z} e^{-\beta H}$. Suppose $\mu$ has a critical point $\hat x$ which is not a global minimizer and has vanishing Hessian:
\begin{equ}[eq:flat-saddle]
H(\hat x) > \min H,\qquad H(x)=H(\hat x)+O(|x-\hat x|^3)
\end{equ}
when $x$ is in a neighborhood of $\hat x$. For an elementary example on $\R$, consider $\frac1Z e^{-\beta (x^3-1)^2} dx$ with $\hat x=0$. 
We claim that this forces $C_{\mathrm{LS}}\ge c\beta^{1/3}$. Here and in what follows, all constants are independent of $\beta$ provided $\beta$ is sufficiently large, and the value of constants $c$ and $C$ may change from line to line.

Indeed, consider a ball $B_\beta=B(\hat x,r)$ with small radius $r=\beta^{-1/3}$. Let
$f$ be a smooth function such that $f=1$ in $B(\hat x,r/2)$, 
$\supp f\subset B(\hat x,r)$, and $|\nabla f| \lesssim 1/r$.
The last condition is easily achieved since $f$ only has to decay from $1$ to $0$ as the radius increases from $r/2$ to $r$.
Then there exists $K>0$ such that
\[
\beta \sup_{B_\beta} |H-H(\hat x)| \le K \beta r^3 \le K.
\]
From this we see that 
\[
\mu(f^2) \ge Z^{-1} e^{-\beta H(\hat x)-K} |B(\hat x,r/2)|,
\qquad
\mu(B_\beta) \le Z^{-1} e^{-\beta H(\hat x)+K} |B(\hat x,r)|,
\]
and $|B(\hat x,r/2)| / |B(\hat x,r)|\ge c_0>0$. Combining these yields $\mu(f^2)\ge c\mu(B_\beta)$. Additionally, the bound on $\lvert \nabla f \rvert$ implies
\[
\mu( |\nabla f|^2 ) \leq C r^{-2} \mu(B_\beta)
\]
and so 
\[
\mu(f^2) \ge c r^2 \mu( |\nabla f|^2 ) .
\]
On the other hand, due to the non-minimizing energy at $\hat x$, $B_\beta$ has small probability; it is easy to show that
\[
\log (1/\mu (B_\beta))\ge  c \beta.
\]
Now we apply Jensen's inequality to the convex function $x\mapsto x\log (x/m)$
with $m=\E[f^2]$, noting that 
noting that $\E[f^2 | B_\beta]=m/p$ with $p=\mu(B_\beta)$:
\begin{equs}
\Ent_\mu (f^2)
&=\E\Big[f^2 \log \frac{f^2}{m}\Big] 
= p\E\Big[f^2 \log \frac{f^2}{m}|B_\beta\Big] 
\ge p \E[f^2 |B_\beta] \log\frac{\E[f^2 |B_\beta] }{m} \\
&= \mu(f^2) \log \big(1/\mu (B_\beta)\big)
\ge c\beta r^2 \mu( |\nabla f|^2 )
 = c\beta^{1/3} \mu( |\nabla f|^2 ) .
\end{equs}
For Yang--Mills with density $e^{-\beta N^2 H(U)}$, we have the potential
\[
H(U)=\sum_p V(U_p),\qquad
V(g) =1-\frac1N \Re\Tr g.
\]
We can construct fields $\hat U$ satisfying \eqref{eq:flat-saddle}. For instance, let $G=\mathrm{SU}(4)$ and $I_4$ be the identity, and define $\hat U=(\hat U_1,\hat U_2)$ at edges starting from $(x,y)\in \Lambda_L$ to be
\[
\hat U_1 (x,y)=i^{-y} I_4 \in \mathrm{SU}(4),
\qquad
\hat U_2 (x,y)=I_4 \in \mathrm{SU}(4),
\]
so that 
\[
\hat U_p = \hat U_1 (x,y) \hat U_1 (x,y+1)^{-1}
= i I_4,
\qquad \forall p.
\]
Note that the minimizer of $V$ is actually $I_4$, at which $V=0$. For $X \in \mathfrak{su}(4)$, since $\Tr(X^2)\in \R$, we have
\[
V( i I_4 e^{tX})=1-\frac1N \Re\Tr(i e^{tX})=1+O(t^3\|X\|^3).
\]
From this one can derive that $H(U)=H(\hat U)+O(d(U,\hat U)^3)$ in a neighborhood of $\hat U$, by a similar calculation as \eqref{eq:dW} where one takes the variation of each edge variable $U_e$ in the direction $X_e\in \mfg$, and uses the fact that in a neighborhood of $\hat U$ one has $d(U,\hat U)=\| (X_e)_e \|$. This is precisely \eqref{eq:flat-saddle}, so the above general argument  shows that $C_{\mathrm{LS}}\ge c\beta^{1/3}$ by
 constructing functions $F$ supported in an $r$-neighborhood of $\hat U$.  

Clearly one can take the same example when $N$ is any multiple of $4$. Note that \eqref{eq:flat-saddle} is only one mechanism by which the LSI constant blows up as $\beta\to \infty$, and in general there could be many other possible scenarios; we do not pursue this further here.
 
\begin{remark}
One can also choose  $F$ to be gauge invariant. One only needs to replace $\hat U$ by its gauge orbit $\hat O = \{g\cdot \hat U:g\in \mathcal G\}$ where $\mathcal G$ is the gauge group,
and use the distance  $\delta(U)=\min_{g\in \mathcal G} d(U,g\cdot \hat U)$.
For a smooth cutoff function $\chi$ supported on $[0,1]$ which equals $1$ on $[0,1/4]$, define $F(U)=\chi(\delta(U)^2/r^2)$ and a tubular neighborhood $B_\beta$ using the distance $\delta$. As a general fact, the square of the distance to a smooth submanifold 
is always a smooth function in a tubular  neighborhood of this submanifold. It is easy to check that $\delta$ is gauge invariant (that is, $\delta(g\cdot U)=\delta(U)$ for $g \in \CG$), so $F$ is gauge invariant.
\end{remark}

\begin{remark}
The continuum analogue of the above example is as follows.  Let $\mathbf T^2$ be the unit torus.
Take a matrix $B=\mbox{diag}(1,1,1,-3)\in \mfg$. 
On $[0,1]^2$ (not yet $\mathbf T^2$), define a $\mathfrak{g}$-valued connection $A=8\pi i y B dx$. Note that 
$A$ has different values at the boundaries $y=0$ and $y=1$; however, since one can glue the two boundaries by an $\mathrm{SU}(4)$-valued transition function $g(x)=e^{8\pi i x B}$ so that 
\[
A(x,1)=g(x)^{-1} A(x,0) g(x)+ g^{-1}(x) dg(x),
\]
$A$ is indeed a well-defined connection.
The curvature of $A$ is $F_A=-8\pi i B dx\wedge dy$.
Writing $d_A$ for the covariant derivative, we have
\[
d_A^* F_A = d_A (*F_A)=0
\]
since $F_A$ is constant and $[A,*F_A]=0$. In other words, $A$ is a critical point of the Yang--Mills action. If we ``discretize'' $\mathbf T^2$ into a $4\times 4$ lattice, then along a horizontal edge  $e$ (whose length is $1/4$) starting from $(j/4,k/4)$
\[
U_1(j,k)= \mbox{hol}_e(A)
= \exp\Big(- \frac14\cdot  8\pi i \frac{k}4  B \Big)
= e^{-i\pi k B/2}=i^{-k} I_4.
\]
The connection $A$ lives on the $\mathrm{SU}(4)$ bundle $E$,
which splits as $E=L_4\oplus L_4\oplus L_4\oplus L_{-12}$
where $L_m$ stands for the line bundle with degree $m$; recall that $\deg L= \frac{i}{2\pi} \int_{\mathbf T^2}F$ and $\frac{i}{2\pi}(-8\pi i)(1,1,1,-3)=(4,4,4,-12)$. Constructing such critical points by splitting or reducing to line bundles is a standard procedure, c.f. \cite[above (4.35)]{Witten2revisited}, \cite[Section~6]{AB83}.
\end{remark}

\bibliographystyle{./Martin}
\bibliography{./refs}

\end{document}